\renewcommand*{\backref}[1]{}
\renewcommand*{\backrefalt}[4]{%
    \ifcase #1 (Not cited.)%
    \or        %
    \else      %
    \fi}
\renewcommand{\UrlBreaks}{\UrlOrds}
\newtheorem{Th}{Theorem}
\newtheorem{Prop}{Proposition}[section]
\newtheorem{lem}{Lemma}
\newtheorem{rem}{Remark}
\newtheorem{Def}{Definition}
\newtheorem{que}{Question}
\newtheorem*{thank}{\ \ \ Acknowledgment}
\def\scalar(#1,#2){(#1\mid#2)}
\newcommand{\ca}{\mathcal{A}}
\newcommand{\cb}{\mathcal{B}}
\newcommand{\cp}{\mathcal{P}}
\newcommand{\bmu}{\bm \mu}
\newcommand{\R}{{\mathbb{R}}}
\newcommand{\Pro}{{\mathbb{P}}}
\newcommand{\C}{{\mathbb{C}}}
\newcommand{\Z}{{\mathbb{Z}}}
\newcommand{\N}{{\mathbb{N}}}
\newcommand{\E}{{\mathbb{E}}}
\def\pgdc{\textrm{gcd}}
\newcommand{\lamob}{\boldsymbol{\lambda}}
\newcommand{\bnu}{\boldsymbol{\nu}}
\newcommand{\vMan}{\boldsymbol{\Lambda}}
\newcommand{\tend}[3][]{\xrightarrow[#2\to#3]{#1}}
\newcommand{\ds}{\displaystyle}
\newcommand*{\Resize}[2]{\resizebox{#1}{!}{$#2$}}
\newcommand*{\Sup}{\Resize{0.7cm}{sup}}
\newcommand{\TODOH}[1]{{#1}}
\title[On the pointwise convergence of the cubic $\dots$ ]{On the pointwise convergence of the cubic average with multiplicative or von Mangoldt weights}
\author{E. H. El Abdalaoui}
\address{University
of Rouen Normandy, Department of Mathematics, LMRS, UMR 60 85, Avenue de l'Universit\'e, BP.12, 76801
Saint Etienne du Rouvray - France}
\email{elhoucein.elabdalaoui@univ-rouen.fr }
\author{Xiangdong Ye}
\address{Wu Wen-Tsun Key Laboratory of Mathematics, USTC, Chinese Academy of Sciences, Department of Mathematics, University
of science and technology of China, Hefei, Anhui, 230026- China}
\thanks{The second author is supported by NNSF of China  (11431012).\\
}
\email{yexd@ustc.edu.cn}
\keywords{A nonconventional ergodic theorem along cube, nonconventional averages, Ces\`{a}ro mean,
moving average, M\"{o}bius, Liouville and von Mangoldt functions.}
\subjclass[2010]{28D15 (Primary), 05D10, 11B37, 37A45 (Secondary)}
\begin{document}

\date{\today}

{\renewcommand\abstractname{Abstract}
\begin{abstract}
 It is shown that the cubic nonconventional ergodic averages of any order with a bounded aperiodic multiplicative function or von Mangoldt weights converge almost surely.
\end{abstract}

\maketitle

\section{Introduction.}
The purpose of this note is motivated by the recent great interest on the M\"{o}bius
function from the dynamical point view, and by the problem of the multiple recurrence which
goes back to the seminal work of Furstenberg \cite{Fur}. This later problem has nowadays a long history.\\

The dynamical study of M\"{o}bius function was initiated recently by Sarnak in \cite{Sarnak4}.
Therein, Sarnak made a conjecture that  the M\"{o}bius
function is orthogonal to any deterministic dynamical sequence. Notice that this precise the definition of a reasonable
sequence in the  M\"{o}bius randomness law mentioned by Iwaniec-Kowalski in \cite[p.338]{Iwaniec}.  Sarnak further
mentioned that Bourgain's approach can be used to prove that for almost all point $x$ in any measurable dynamical system
$(X,\ca,T,\Pro)$, the M\"{o}bius function is orthogonal to any dynamical sequence $f(T^nx)$, where $f$ is a square integrable function.
For simple proofs and other related results, see  \cite{al-lem} and \cite{Cuny-W}. 
 

Here, we are interested in the pointwise convergence of cubic nonconventional ergodic averages with bounded aperiodic multiplicative functions weight and
von Mangoldt weight.\\

The convergence of cubic nonconventional ergodic averages was initiated by  Bergelson in \cite{Berg},
where convergence in $L^2$ was shown for order 2 and under the extra assumption that all the
transformations are equal. Under the same assumption, Bergelson's result was extended by
Host and Kra for cubic averages of order 3  in \cite{Host-K1}, and for arbitrary order in \cite{Host-K2}.
Assani proved that pointwise convergence of cubic nonconventional ergodic averages of order 3 holds for not necessarily
commuting maps in \cite{Assani1}, and further established the pointwise convergence for cubic averages of
arbitrary order when all the maps are equal.
In \cite{chu-nikos}, Chu and Frantzikinakis completed the study and established the pointwise convergence for
the cubic averages of arbitrary order.
Very recently, Huang-Shao and Ye \cite{hsy-1} gave a topological-like proof of the pointwise convergence
of the cubic nonconventional ergodic average when all the maps are equal. They further applied their
method to obtain the pointwise convergence of nonconventional ergodic averages for a distal system.\\

Here, we establish that the cubic averages of any order with the aperiodic bounded multiplicative function
weight converge to zero almost surely.  The proof depends
heavily on 
the Gowers inverse theorem. As a consequence,
we obtain that the cubic averages of any order with M\"{o}bius or Liouville weights converge to zero
almost surely. Moreover, we establish that  the cubic nonconventional ergodic averages weighted with
von-Mangoldt function converge. This result is obtained as a consequence of
the very recent result of Ford-Green-Konyagin-Tao \cite{FGKtao} combined with the Gowers inverse theorem and the seive methods trick called $W$-trick.
\vskip 0.2cm

The paper is organized as follows. In Section 2, we state
our main results and we recall the main ingredients needed for the proofs. In Section 3, we prove our fist main result. In Section 4, we give a proof of
our second main result.

\section{Basic definitions and tools.}\label{Sec:dt}
In this section we will recall some Basic definitions and tools we will use in this paper, and state our main results.

\subsection{The multiplicative or von Mangoldt functions}
Recall that the Liouville function $\lamob ~~:~~ \N^*\longrightarrow  \{-1,1\}$ is defined by
$$
\lamob(n)=(-1)^{\Omega(n)},
$$
where $\Omega(n)$ is the number of prime factors of $n$ counted with multiplicities with $\Omega(1)=1$. \TODOH{Obviously,
$\lamob$ is completely mutiplicative, that is, $\lamob(nm)=\lamob(n)\lamob(m),$ for any $n,m \in \N^*$}.
The integer $n$ is said to be not square-free if there is a prime number $p$ such that $n$ is in the class of $0$ mod $p^2$.
The M\"{o}bius function
$\bmu ~:~ \N \longrightarrow \{-1,0,1\}$ is define as follows
$$
\bmu(n)=\begin{cases}
\lamob(n),& \text{ if $n$ is square-free ;}\\
1,& \text{ if }n=1;\\
0,& \text{ otherwise}.
\end{cases}
$$
This definition of M\"{o}bius function establishes that the the restriction of Liouville function and M\"{o}bius function to set of
square free numbers coincident. Nevertheless, the  M\"{o}bius function is only mutiplicative, that is,
$\bmu(mn)=\bmu(m)\bmu(n)$ whenever $n$ and $m$ are coprime.\\

We further remind that the von Mangoldt function and it's cousin $\Lambda'$ are given by
$$
\vMan(n)=\begin{cases}
\log(p),& \text{ if $n=p^\alpha,$ for some prime $p$ and $\alpha \geq 1$ ;}\\
0,& \text{ otherwise}.
\end{cases}
$$
and
$$
\Lambda'(n)=
\begin{cases}
\log(n) &\quad\text{if $n$ is a prime,} \\
0&\quad\text{if not.}
\end{cases}
$$
We will denote as customary by $\pi(N)$ the number of prime less than $N$ and by $\cp$ the set of prime. We remind that for $x>0$ the chebyshev functions are defined by

$$\Psi(x)=\sum_{n \leq x}\Lambda(n) ~~~~~~~\textrm{~~and~~} \upsilon(x)=\sum_{n \leq x}\Lambda'(n).$$

By the Prime Number Theorem  with Reminder (PNTR), it is well-known that $\pi(N)$ is realted to the function $Li$ given by
$$Li(N)=\int_2^{N}\frac1{\log(t)}dt.$$
We further have, by Sleberg's estimation,
$$\pi(N)=\frac{N}{\log(N)}+O\big(\frac{N}{(\log(N))^2}\big),$$
This estimation is equivalent to the following relation
$$\upsilon(N)=N+O\big(\frac{N}{(\log(N))}\big).$$



We say that the multiplicative function $\bnu$ is aperiodic if

$$\frac{1}{N}\sum_{n=1}^{N}\bnu(an+b) \tend{N}{+\infty}0,$$
\noindent for any $(a, b) \in \N^* \times \N$.
By Davenport's theorem \cite{Da} and Batman-Chowla's theorem \cite{Bat-Cho}, the M\"{o}bius and Liouville functions are aperiodic.


We need the following notion of statistical orthogonality.
\begin{Def}Let $(a_n),(b_n)$ be two sequences of complex numbers. The sequences $(a_n)$ and $(b_n)$ are said to be statistically orthogonal (or just orthogonal)
if
$$\Big|\frac{1}{N}\sum_{n=1}^{N}a_n \overline{b_n}\Big|= o\Bigg(\Big|\frac{1}{N}\sum_{n=1}^{N}|a_n|^2\Big|\Big)^{\frac12}
\Big(\Big|\frac{1}{N}\sum_{n=1}^{N}|b_n|^2\Big|\Big)^{\frac12}\Bigg).$$
\end{Def}

\subsection{Cubic averages and related topics.}

Let $(X,\cb,\Pro)$ be a Lebesgue probability space and given three measure preserving transformations $T_1,T_2,T_3$ on $X$.
Let $f_1,f_2,f_3 \in L^{\infty}(X)$.  The cubic nonconventional ergodic averages of order $2$ with weight $A$ are defined by   $$\frac1{N^2}\sum_{n,m=1}A(n)A(m)A(n+m)f_1(T_1^nx) f_2(T_2^nx) f_3(T_3^nx).$$
This nonconventional ergodic average can be seen as a classical one as follows
$$\frac1{N^2}\sum_{n,m=1}\widetilde{f_1}({\widetilde{T_1}}^n(A,x))
{\widetilde{f_2}}({\widetilde{T_2}}^m(A,x)) {\widetilde{f_3}}({\widetilde{T_3}}^{n+m}(A,x)) ,$$
where $\widetilde{f_i}=\pi_0\otimes f_i, \widetilde{T_i}=(S \otimes T_i),~~~i=1,2,3$ and
$\pi_0$ is define by $x=(x_n)\longmapsto x_0$ on the space $Y=\C^{N}$ equipped with some probability measure.\\

\noindent More generaly, let  $k \geq 1$ and put
$$C^*=\{0,1\}^k\setminus\{(0,\cdots,0)\}.$$
Consider the family $(T_e)_{e \in C^*}$  of transformations measure preserving on $X$ and for each $e \in C^*$ let  $f_e$ be in  $L^{\infty}(X)$.
The cubic nonconventional ergodic averages of order $k$ with weight $A$ are given by
\begin{equation}\label{def1}
\dfrac1{N^k}\sum_{{\boldsymbol{n}} \in [1,N]^k}\prod_{{\boldsymbol{e}}
\in C^*}A({\boldsymbol{n}}.{\boldsymbol{e}})f_{{\boldsymbol{e}}}
\big(T_{{\boldsymbol{e}}}^{{\boldsymbol{n}}.{\boldsymbol{e}}}x\big) \tend{N}{+\infty}0,
\end{equation}
where ${\boldsymbol{n}}=(n_1,\cdots,n_k)$, ${\boldsymbol{e}}
=(e_1,\cdots,e_k)$,  ${\boldsymbol{n}}.{\boldsymbol{e}}$ is the usual inner product.\\

\noindent The study of the cubic averages is closely and strongly related to the notion of seminorms introduced in \cite{Gowers} and \cite{Host-K2}. 
They are nowadays called Gowers-Host-Kra's seminorms.\\

\noindent Assume that $T$ is an ergodic measure preserving transformation on $X$. Then,  for any $k \geq 1$,
the Gowers-Host-Kra's seminorms on $L^{\infty}(X)$ are defined inductively as follows
 $$\||f|\|_1=\Big|\int f d\mu\Big|;$$
 $$\||f|\|_{k+1}^{2^{k+1}}=\lim\frac{1}{H}\sum_{l=1}^{H}\||\overline{f}.f\circ T^l|\|_{k}^{2^{k}}.$$

\noindent For each $k\geq 1$, the seminorm $\||.|\|_{k}$ is well defined. For details, we refer the reader to \cite{Host-K2} and \cite{Host-Studia}. Notice that
 the definitions of Gowers-Host-Kra's seminorms can be easily extended to non-ergodic maps as it was mentioned by Chu and
  Frantzikinakis in \cite{chu-nikos}.\\

\noindent The importance of the Gowers-Host-Kra's seminorms in the study of the nonconventional multiple ergodic averages
is due to the existence of a $T$-invariant sub-$\sigma$-algebra $\mathcal{Z}_{k-1}$ of $X$ that satisfies
$$\E(f|\mathcal{Z}_{k-1})=0 \Longleftrightarrow \||f|\|_{k}=0.$$
This was proved by Host and Kra in \cite{Host-K2}. The existence of the factors $\mathcal{Z}_{k}$ was also showed by Ziegler in \cite{Ziegler}.
We further notice that Host and Kra established a connection between the $\mathcal{Z}_{k}$ factors and the nilsystems in \cite{Host-K2}.

\subsection{The main results}

At this point we are able to state our  main results

\begin{Th}\label{main}
The cubic nonconventional ergodic averages of any order with a bounded aperiodic multiplicative function
weight converge almost surely to zero, that is, for any $k \geq 1$, for any  $(f_e)_{e \in C^*} \subset L^{\infty}(X)$,
for almost all $x$, we have
\begin{equation}\label{con}
\dfrac1{N^k}\sum_{{\boldsymbol{n}} \in [1,N]^k}\prod_{{\boldsymbol{e}}
\in C^*}\bnu({\boldsymbol{n}}.{\boldsymbol{e}})f_{{\boldsymbol{e}}}
\big(T_{{\boldsymbol{e}}}^{{\boldsymbol{n}}.{\boldsymbol{e}}}x\big) \tend{N}{+\infty}0,
\end{equation}
where ${\boldsymbol{n}}=(n_1,\cdots,n_k)$, ${\boldsymbol{e}}
=(e_1,\cdots,e_k)$, $C^*=\{0,1\}^k\setminus\{(0,\cdots,0)\}$,
${\boldsymbol{n}}.{\boldsymbol{e}}$ is the usual inner product, and $\bnu$ is the bounded aperiodic multiplicative function.
 \end{Th}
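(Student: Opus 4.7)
\medskip
\noindent\textbf{Proof plan.} The strategy I would follow has two essentially independent ingredients: (i) a \emph{pointwise} generalised von Neumann inequality that, at every point $x\in X$, controls the weighted average in (\ref{con}) by a Gowers $U^{k}$ norm of the weight $\bnu$ restricted to $[1,N]$; and (ii) the vanishing of that Gowers norm for every bounded aperiodic multiplicative function. Together these deliver convergence to zero uniformly in $x$, from which the almost-sure statement of (\ref{con}) is immediate.

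For (i), fix $x\in X$ and set $w_{\boldsymbol{e}}(m)\egdef\bnu(m)\,f_{\boldsymbol{e}}(T_{\boldsymbol{e}}^{m}x)$, so that the left-hand side of (\ref{con}) becomes
$$I_N(x)\;=\;\frac{1}{N^k}\sum_{\boldsymbol{n}\in[1,N]^k}\prod_{\boldsymbol{e}\in C^*}w_{\boldsymbol{e}}(\boldsymbol{n}\cdot\boldsymbol{e}),$$
with $\|w_{\boldsymbol{e}}\|_{\infty}\le\|\bnu\|_{\infty}\|f_{\boldsymbol{e}}\|_{\infty}$. The $2^{k}-1$ linear forms $\boldsymbol{n}\cdot\boldsymbol{e}$ are exactly the non-trivial vertex sums on the $k$-dimensional cube, and I would iterate the Cauchy--Schwarz inequality in the coordinates $n_{1},\dots,n_{k}$ one at a time, in the spirit of Gowers' original proof of Szemer\'edi's theorem. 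Each step doubles the number of factors but peels off one coordinate and absorbs one of the $f_{\boldsymbol{e}}$ factors into its sup-norm, without any averaging in $x$. After $k$ steps this produces an estimate of the form
$$|I_{N}(x)|^{2^{k}}\;\le\;C_{k}\Big(\sup_{\boldsymbol{e}\in C^{*}}\|f_{\boldsymbol{e}}\|_{\infty}\Big)^{(2^{k}-1)2^{k}}\bigl\|\bnu\cdot\1_{[1,N]}\bigr\|_{U^{k}},$$
with a constant $C_{k}>0$ that depends only on $k$ and, crucially, with a right-hand side independent of $x$.

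For (ii), I would invoke the Gowers inverse theorem of Green--Tao--Ziegler: if $\|\bnu\cdot\1_{[1,N]}\|_{U^{k}}\ge\delta>0$ for arbitrarily large $N$, then along the same subsequence there exists a bounded $(k-1)$-step nilsequence $\chi_{N}$ with $\bigl|\frac{1}{N}\sum_{n\le N}\bnu(n)\,\overline{\chi_{N}(n)}\bigr|\gg_{\delta,k}1$. The higher-order Daboussi--K\'atai type theorem of Frantzikinakis--Host---the nil-analogue of the Davenport estimate already cited in the paper for $\bmu$ and $\lamob$---rules out such correlation for any bounded aperiodic multiplicative function. Hence $\|\bnu\cdot\1_{[1,N]}\|_{U^{k}}\to 0$, and inserting this into the estimate from step (i) forces $I_{N}(x)\to 0$ uniformly in $x$, which is strictly stronger than (\ref{con}).

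The hard part is step (i). The Cauchy--Schwarz iteration must be executed so that the factors $f_{\boldsymbol{e}}(T_{\boldsymbol{e}}^{\,\cdot}x)$ are only ever bounded by their sup-norms and are never integrated in $x$ or controlled by a Host--Kra seminorm $\||\cdot|\|_{k}$; either of the latter would deliver only $L^{2}$ convergence (or, at best, almost-sure convergence after a separate maximal inequality), whereas we need a clean pointwise bound. The presence of $2^{k}-1$ potentially distinct transformations $T_{\boldsymbol{e}}$, as opposed to a single $T$, makes this accounting delicate and is where the bulk of the technical work will lie. A secondary, softer issue is to match the Gowers norm on the interval $\{1,\dots,N\}$ with the cyclic-group version $\|\bnu\|_{U^{k}[\Z_{\til N}]}$, $\til N\gg N$, to which the inverse theorem is classically phrased; this is handled by the standard embedding trick with a boundary loss negligible compared with the gain $\|\bnu\cdot\1_{[1,N]}\|_{U^{k}}\to 0$.
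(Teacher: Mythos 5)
Your step (ii) is sound and is indeed one of the ingredients the paper uses (aperiodicity of $\bnu$ plus the Frantzikinakis--Host results give $\||\bnu|\|_{U^{s}(N)}\tend{N}{+\infty}0$ and orthogonality of $\bnu$ to nilsequences). The gap is in step (i): the inequality
$$|I_{N}(x)|^{2^{k}}\;\le\;C_{k}\Big(\sup_{\boldsymbol{e}}\|f_{\boldsymbol{e}}\|_{\infty}\Big)^{(2^{k}-1)2^{k}}\bigl\|\bnu\cdot\1_{[1,N]}\bigr\|_{U^{k}}$$
is false as a statement about bounded sequences. The Cauchy--Schwarz iteration over the cube does not let you ``peel off'' the dynamical factors into sup-norms while keeping only the weight: at each linear form the weight $\bnu(\boldsymbol{n}\cdot\boldsymbol{e})$ and the factor $f_{\boldsymbol{e}}(T_{\boldsymbol{e}}^{\boldsymbol{n}\cdot\boldsymbol{e}}x)$ enter as a single pointwise product $w_{\boldsymbol{e}}(m)=\bnu(m)f_{\boldsymbol{e}}(T_{\boldsymbol{e}}^{m}x)$ evaluated at the same argument, and what the generalized von Neumann argument yields is control by $\min_{\boldsymbol{e}}\|w_{\boldsymbol{e}}\|_{U^{k+1}}$, i.e.\ the Gowers norm of the \emph{product} sequence, not of $\bnu$ alone. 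Smallness of $\|\bnu\|_{U^{k}}$ does not propagate to the product: already for $k=2$, if $f_{\boldsymbol{e}}(T_{\boldsymbol{e}}^{m}x)$ were to oscillate like $\overline{\bnu(m)}$ the average $\frac1{N^2}\sum_{n,m}|\bnu(n)|^{2}|\bnu(m)|^{2}|\bnu(n+m)|^{2}$ stays bounded away from zero (take $\bnu=\lamob$) while $\|\lamob\|_{U^{2}[N]}\to0$. Ruling out such resonance between $\bnu$ and the orbit sequences is precisely the content of the theorem, and it is not delivered by Cauchy--Schwarz; this is also why your claimed conclusion of convergence \emph{uniform in $x$} should have been a warning sign.

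The paper's proof confronts exactly this point. After the contradiction hypothesis it invokes the Chu--Frantzikinakis von Neumann estimate to get $\||\bnu(n)f_{e}\circ T_{e}^{n}(x)|\|_{U^{k+1}}\ge\delta$ for the product sequence, then applies the inverse theorem (Proposition \ref{DITGN}) to that product to produce a nilsequence $F(g(n)\Gamma)$ correlating with $\bnu(n)f_{e}(T_{e}^{n}x)$, and only then separates the two factors by decomposing $f_{e}=f_{e,ns}+f_{e,z}+f_{e,e}$ via Proposition \ref{NSZE}: the nilsequence component is killed by the orthogonality of $\bnu$ to nilsequences (products of nilsequences being nilsequences), and the $U^{k+1}$-null component is handled through the spectral theorem together with $\||\bnu|\|_{U^{s}(N)}\to0$. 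To repair your argument you would need to replace your step (i) by this chain (von Neumann inequality for the product, inverse theorem, structure-theorem decomposition of the $f_{e}$), which is where the real work of the theorem lies.
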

Our second main result can be stated as follows

\begin{Th}\label{main-Mangoldt}The cubic nonconventional ergodic averages of any order with von Mangoldt function
weight converge almost surely.
\end{Th}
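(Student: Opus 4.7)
\medskip

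\noindent\textbf{Proof proposal.} The plan is to reduce Theorem \ref{main-Mangoldt} to Theorem \ref{main} (together with the unweighted pointwise cubic convergence of Chu--Frantzikinakis) by means of the $W$-trick of Green--Tao, using the Ford--Green--Konyagin--Tao bounds on Gowers norms of the normalized von Mangoldt function as the crucial input. Fix a slowly growing parameter $w=w(N)$ and set $W=\prod_{p\le w}p$. For each residue $b$ with $\gcd(b,W)=1$, define the normalized weight
$$\widetilde{\vMan}_{b,W}(n)\egdef \frac{\phi(W)}{W}\,\vMan(Wn+b).$$
First I would split each sum $\boldsymbol{n}\cdot\boldsymbol{e}$ in \eqref{con} (with $\vMan$ in place of $\bnu$) according to the residue class of $\boldsymbol{n}\cdot\boldsymbol{e}$ modulo $W$. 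The indices where some $\boldsymbol{n}\cdot\boldsymbol{e}$ is not coprime to $W$ contribute only prime powers of primes $\le w$, a set of zero density, so these are negligible. What remains is a finite sum (parameterized by $(b_{\boldsymbol{e}})_{\boldsymbol{e}\in C^*}$ with each $b_{\boldsymbol{e}}$ coprime to $W$) of cubic-type averages along sub-progressions $\boldsymbol{n}\cdot\boldsymbol{e}=W\boldsymbol{m}\cdot\boldsymbol{e}+b_{\boldsymbol{e}}$ weighted by $\prod_{\boldsymbol{e}}\widetilde{\vMan}_{b_{\boldsymbol{e}},W}(\boldsymbol{m}\cdot\boldsymbol{e})$, times a combinatorial factor $(W/\phi(W))^{|C^*|}$.

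Second, I would write each weight as $\widetilde{\vMan}_{b,W}=1+(\widetilde{\vMan}_{b,W}-1)$ and expand the product over $\boldsymbol{e}\in C^*$ into $2^{|C^*|}$ pieces. The single piece where every factor equals $1$ is the unweighted cubic average along the progression $\boldsymbol{m}\mapsto W\boldsymbol{m}+(b_{\boldsymbol{e}})$, which by the Chu--Frantzikinakis pointwise theorem \cite{chu-nikos} converges a.e.\ as $N\to\infty$ (with $W$ fixed); averaging the limits against the uniform measure on residues mod $W$ and letting $w\to\infty$ recovers the expected main term (and, since the $f_{\boldsymbol{e}}$ are arbitrary bounded functions, we actually need only that the limit exists, not its value).

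Third, every remaining piece contains at least one factor of the form $\widetilde{\vMan}_{b,W}-1$. For these pieces I would invoke exactly the same machinery as in the proof of Theorem \ref{main}: the Gowers inverse theorem, combined with the van der Corput lemma applied iteratively along the cube directions, reduces convergence to zero to the statistical orthogonality of $\widetilde{\vMan}_{b,W}-1$ with polynomial nilsequences of bounded degree and complexity. This orthogonality, uniformly in $b$ coprime to $W$ and quantitative in $W$, is precisely the content of the Ford--Green--Konyagin--Tao estimate \cite{FGKtao}: the $U^s[N]$-norm of $\widetilde{\vMan}_{b,W}-1$ tends to zero as $N,W\to\infty$ (with $W$ a suitable function of $N$). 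Plugging this bound into the scheme from Theorem \ref{main} gives a.s.\ convergence to zero of every ``error'' piece, provided $w(N)\to\infty$ sufficiently slowly.

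The main obstacle will be the same one that appears in Theorem \ref{main}: transferring smallness of a global Gowers norm into \emph{pointwise} smallness of the nonconventional average. One does this by the now-standard route of establishing a maximal inequality along a sufficiently dense sequence $N_j\to\infty$ (a geometric sequence suffices here, after a routine interpolation between $N_j$ and $N_{j+1}$), so that a Borel--Cantelli argument upgrades the $L^2$-control, furnished by the Gowers inverse theorem and nilsequence equidistribution, to almost-sure convergence. A secondary technical issue is the correct synchronization of the $W$-trick parameter $w(N)$ with the quantitative rate in \cite{FGKtao}, but this is a standard diagonal choice and presents no conceptual difficulty.
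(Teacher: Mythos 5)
\medskip

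\noindent\textbf{Review.} Your proposal follows essentially the same route as the paper: the $W$-trick decomposition of $\vMan$ (equivalently $\Lambda'$) into an unweighted main term handled by Chu--Frantzikinakis plus error terms carrying a factor $\frac{\phi(W)}{W}\Lambda'(Wn+b)-1$, which are then killed by combining the generalized von Neumann/inverse Gowers norm machinery of Theorem \ref{main} with the Ford--Green--Konyagin--Tao orthogonality of the $W$-tricked von Mangoldt function to nilsequences. The only differences are presentational --- you expand the weight product into $2^{|C^*|}$ pieces and propose a maximal-inequality/Borel--Cantelli route to almost-sure convergence, where the paper instead argues by contradiction from a positive-measure set of divergence via \cite[Proposition 3.2]{chu-nikos} and the spectral theorem --- and these do not change the substance of the argument.
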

Notice that Theorem \ref{main-Mangoldt} is related in some sense to the weak correlation of von Mangoldt function.
Moreover, the study of the correlations of von Mangoldt function is of great importance in number theory, since it is related to the
famous old conjecture of the twin numbers and more generaly to Hard-Littlewood k-tuple conjecture. It is also related to Riemann hypothesis and the Goldbach
conjectures.

\subsection{ Some tools from the theory of Nilsystems and nilsequences.}  
The nilsystems are defined in the setting of homogeneous space \footnote{For a nice account of the theory 
of the homogeneous space we refer the reader to \cite{Dani},\cite[pp.815-919]{B-katok}.}. Let $G$ be a Lie
group, and $\Gamma$ a discrete cocompact subgroup (Lattice, uniform subgroup) of $G$. The homogeneous space is given by
$X=G/\Gamma$ equipped with the Haar measure $h_X$ and the canonical complete $\sigma$-algebra
$\cb_{c}$. The action of $G$ on $X$ is by the left translation, that is, for any $g \in G$,
we have $T_g(x\Gamma)=g.x\Gamma=(gx)\Gamma.$ If further $G$ is a nilpotent Lie group of order
$k$,
$X$ is said to be a $k$-step nilmanifold. For any fixed $g \in G$, the dynamical system $(X,\cb_{c},h_X,T_g)$
is called a $k$-step nilsystem. The basic $k$-step  nilsequences on $X$ are defined by $f(g^nx\Gamma)=(f \circ T_g^n)(x\Gamma)$,
where $f$ is a continuous function of $X$. Thus, $(f(g^nx\Gamma))_{n \in \Z}$ is any element of
$\ell^{\infty}(\Z)$, the space of bounded sequences, equipped with uniform norm
$\ds \|(a_n)\|_{\infty}=\sup_{n \in \Z}|a_n|$. A $k$-step nilsequence, is a uniform limit of basic $k$-step nilsequences.
For more details on the nilsequences we refer the reader to  \cite{Host-K3}
and \cite{BHK}\footnote{The term 'nilsequence' was coined by Bergleson-Host and Kra in 2005 \cite{BHK}.}.\\

Recall that the sequence of subgroups $(G_n)$ of $G$ is a filtration if  $G_1=G,$ $G_{n+1}\subset G_{n},$ and
$[G_n,G_p] \subset G_{n+p},$ where $[G_n,G_p]$ denotes the subgroup of $G$ generated by the commutators $[x,y]=
x~y~x^{-1}y^{-1}$ with $x \in G_n$ and $y \in G_p$.
The lower central filtration is given by $G_1=G$ and $G_{n+1}=[G,G_n]$. It is well know that the lower central filtration allows to construct a Lie algebra $\textrm{gr}(G)$ over the ring $\Z$ of integers. $\textrm{gr}(G)$ is called a
graded Lie algebra associated to $G$ \cite[p.38]{Bourbaki2}. The filtration is said to be of degree or length $l$ if
$G_{l+1}=\{e\},$ where $e$ is the identity of $G$.
We denote by $G^e$ the identity component of $G$. Since $X=G/\Gamma$ is compact, we can assume that $G/G^e$ is finitely generated \cite{Leib}.\\

If $G$ is connected and simply-connected  with Lie algebra $\mathfrak{g}$ \footnote{By Lie's fundamental theorems and up
to isomorphism, $\mathfrak{g}=T_eG$, where $T_eG$ is the tangent space at the identity $e$ \cite[p.34]{Kirillov}.},
then $\exp~~:~~G \longrightarrow \mathfrak{g}$ is a diffeomorphism, where $\exp$ denotes the Lie group exponential map.
We further have, by Mal'cev's criterion, that $\mathfrak{g}$ admits a basis $\mathcal{X}=\{X_1,\cdots,X_m\}$ with rational structure constants \cite{Malcev}, that is,
$$[X_i,X_j]=\sum_{n=1}^{m} c_{ijn}X_n,~~~~~~\textrm{for~~all~~~}  1 \leq i,j \leq k,$$
where the constants $c_{ijn}$ are all rational. \\

Let $\mathcal{X}=\{X_1,\cdots,X_m\}$ be a Mal'cev basis of  $\mathfrak{g}$, then any element $g \in G$ can be uniquely written in the form $g=\exp\Big(t_1X_1+t_2X_2+\cdots+t_m X_m\Big),$ $t_i \in \R$, since the map $\exp$ is a diffeomorphism.
The numbers $(t_1,t_2,\cdots,t_k)$ are called the Mal'cev coordinates of the first kind of $g$. In the same manner,
$g$  can be uniquely written in the form $g=\exp(s_1X_1). \exp(s_2X_2).\cdots.\exp(s_m X_m),$ $s_i \in \R$.
The numbers $(s_1,s_2,\cdots,s_k)$ are called the Mal'cev coordinates of the second kind of $g$. Applying Baker-Campbell-Hausdorff formula, it can be shown that the multiplication law in $G$ can be expressed by a polynomial mapping
$\R^m \times \R^m \longrightarrow \R^m$ \cite[p.55]{Onishchik}, \cite{Green-Tao-Orbit}. This gives that any polynomial sequence $g$ in $G$ can be written as follows
$$g(n)=\gamma_1^{p_1(n)},\cdots,\gamma_m^{p_m(n)},$$
where $\gamma_1 ,\cdots,\gamma_m \in G$, $p_i ~~:~~\N \longrightarrow \N$ are polynomials \cite{Green-Tao-Orbit}. Given $n, h \in \Z$, we put
$$\partial_{h}g(n)=g(n+h)g(n)^{-1}.$$
This can be interpreted as a nilsquencediscrete derivative on $G$. Given a filtration $(G_n)$ on $G$,
 a sequence of polynomial $g(n)$ is said to be adapted to $(G_n)$ if  $\partial_{h_i}\cdots \partial{h_1}g$ takes values in $G_i$ for all positive integers $i$ and for all choices of $h_1, \cdots,h_i \in \Z$. The set of all polynomial sequences adapted to $(G_n)$ is denoted by ${\textrm{poly}}(\Z,(G_n))$.\\

  Furthermore, given a Mal'cev's basis ${\mathcal{X}}$ one can induce a right-invariant metric $d_{\mathcal{X}}$ on $X$ \cite{Green-Tao-Orbit}. We remind that for a real-valued function $\phi$ on $X$, the Lipschitz norm is defined by
$$\|\phi\|_{L}=\|\phi\|_{\infty}+\sup_{x \neq y}\frac{\big|\phi(x)-\phi(y)\big|}{d_{\mathcal{X}}(x,y)}.$$
The set $\mathcal{L}(X,d_{\mathcal{X}})$ of all Lipschitz functions is a normed vector space, and for any  $\phi$ and $\psi$ in $\mathcal{L}(X,d_{\mathcal{X}})$, $\phi \psi \in \mathcal{L}(X,d_{\mathcal{X}})$ and
$\|\phi \psi \|_L \leq \|\phi\|_L \|\psi\|_L$. We thus get, by Stone-Weierstrass theorem, that the subsalgebra $\mathcal{L}(X,d_{\mathcal{X}})$ is dense in the space of continuous functions $C(X)$ equipped with uniform norm $\|\|_{\infty}$. 
It turns out that for a Lipschitz function, the extension from an arbitrary subset is possible without
increasing the Lipschitz norm, thanks to Kirszbraun-Mcshane extension theorem \cite[p.146]{Dudley}.\\ 

In this setting, we remind the following fundamental Green-Tao's theorem on the strong orthogonality of the M\"{o}bius
function to any $m$-step nilsequence, $m \ge 1$.
\begin{Prop}\label{Green-Tao-th}\cite[Theorem 1.1]{Green-Tao}.
Let $G/\Gamma$ be a $m$-step  nilmanifold for some $m\ge 1$.
Let $(G_p)$ be a filtration of $G$ of degree $l \ge 1$. Suppose that $G/\Gamma$ has a $Q$-rational Mal'cev basis $\mathcal{X}$ for some $Q \ge 2$, defining
a metric $d_{\mathcal{X}}$ on $G/\Gamma$. Suppose that $F : G/\Gamma\rightarrow [-1, 1]$ is a Lipschitz function. Then, for any $A>0$, we have the bound,
$$\underset{g \in {\textrm{poly}}(\Z, (G_{p}))}{\Sup}\Big|\frac1{N}\sum_{n=1}^{N}\bmu(n)F(g(n)\Gamma)\Big| \leq C\frac{(1 + ||F||_{L})} {\log^{A} N},$$
where the constant $C$ depends on $m,l,A,Q$, $N \geq 2$.
 \end{Prop}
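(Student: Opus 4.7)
The plan is to reduce the claim to controlling correlations of $\bmu$ with totally equidistributed polynomial orbits on nilmanifolds, and then to exploit this through a Vaughan-type bilinear decomposition of $\bmu$ whose pieces are estimated via a quantitative form of Leibman's equidistribution theorem. The argument proceeds by induction on the degree $l$ of the filtration; throughout, all implicit constants are allowed to depend only on $m$, $l$, $Q$, and the target exponent $A$.

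First I would invoke the quantitative factorization theorem for polynomial sequences on nilmanifolds: for any $g \in \textrm{poly}(\Z,(G_p))$ and any precision parameter $M = \log^{A'} N$ with $A' = A'(A,l,m)$ sufficiently large, one decomposes
\[
g(n) = \epsilon(n)\, g'(n)\, \gamma(n),
\]
where $\epsilon$ is $(M,N)$-smooth in Mal'cev coordinates, $\gamma$ takes values in a rational subgroup and is periodic with period $Q' \le M^{O(1)}$, and $g'$ is either constant or totally $1/M$-equidistributed in some proper rational subnilmanifold $H/(H \cap \Gamma)$ of $G/\Gamma$. The smooth factor $\epsilon$ can be absorbed into $F$ on dyadic subintervals of length roughly $N/M$ without appreciably enlarging the Lipschitz norm, while the periodic factor $\gamma$ is removed by partitioning the $n$-range into residue classes modulo $Q'$. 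In this way the problem reduces, for each residue class $n \equiv b \pmod{Q'}$, to bounding
\[
\frac{Q'}{N}\sum_{\substack{n \le N \\ n \equiv b \,(Q')}} \bmu(n)\, \widetilde F\!\bigl(g'(n)\Gamma_H\bigr),
\]
with $g'$ highly equidistributed in $H/\Gamma_H$ and $\widetilde F$ Lipschitz of comparable norm.

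Second, I would feed this reduced sum into a Vaughan (or Heath--Brown) identity, which writes $\bmu$ on $[1,N]$ as a bounded number of Type~I convolutions $\sum_{d \le U} a_d \sum_{m \le N/d} \widetilde F(g'(dm)\Gamma_H)$ and Type~II convolutions $\sum_{U < d \le V} \sum_{W < m \le N/d} a_d b_m \widetilde F(g'(dm)\Gamma_H)$, with cutoffs $U$, $V$, $W$ chosen as suitable small powers of $N$. For the Type~I pieces, after fixing $d$ the sequence $m \mapsto g'(dm)$ again lies in $\textrm{poly}(\Z,(G_p))$, since the class of polynomial sequences adapted to a filtration is stable under affine substitutions of the input, and its total $1/M$-equidistribution in $H/\Gamma_H$ forces the inner $m$-average of $\widetilde F$ against the constant function $1$ to be $O(\|\widetilde F\|_L/M)$; summation against the trivial bound $|a_d| \ll \log N$ is then admissible.

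The main obstacle is the Type~II sum. After Cauchy--Schwarz in the outer variable $d$, it reduces to controlling double sums
\[
\sum_{d, d'}\sum_{m} \widetilde F\!\bigl(g'(dm)\Gamma_H\bigr)\, \overline{\widetilde F\!\bigl(g'(d'm)\Gamma_H\bigr)},
\]
which for each fixed $(d,d')$ is a polynomial-orbit sum on the product nilmanifold $(H \times H)/(\Gamma_H \times \Gamma_H)$ evaluated at the sequence $m \mapsto \bigl(g'(dm),\, g'(d'm)\bigr)$. To extract a saving one must show that for all but a sparse set of pairs $(d,d')$ this product sequence is itself totally equidistributed in some proper rational subnilmanifold; this is done by reapplying the factorization theorem in the product setting and counting the $(d,d')$ for which a nontrivial horizontal character on $G \times G$ evaluated along the sequence takes abnormally small values, the exceptional pairs being controlled via Weyl-type inequalities and lattice-point counts. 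Combining the Type~I and Type~II contributions, each of which saves a factor $M^{-c}$ for some $c = c(l,m) > 0$, and substituting $M = \log^{A'} N$ with $A'$ chosen large enough relative to $A$, yields the required bound $C(1 + \|F\|_L)\log^{-A} N$, uniformly in $g \in \textrm{poly}(\Z,(G_p))$.
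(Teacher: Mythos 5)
The paper offers no proof of this Proposition: it is quoted verbatim from Green--Tao \cite[Theorem 1.1]{Green-Tao}, so there is nothing internal to compare against. Your outline faithfully reproduces the strategy of the original Green--Tao argument (factorization into smooth, totally equidistributed, and rational--periodic parts via the quantitative Leibman/factorization theorem of \cite{Green-Tao-Orbit}, then a Vaughan-type Type~I/Type~II decomposition with the Type~II sums handled by Cauchy--Schwarz on the product nilmanifold and a count of exceptional pairs via horizontal characters), which is exactly the intended source of the result.
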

We further need the following decomposition theorem due to Chu-Frantzikinakis and Host from \cite[Proposition 3.1]{chu-Nikos-H}.  
\begin{Prop}[NSZE-decomposition theorem \cite{chu-Nikos-H}]\label{NSZE} Let $(X,\ca,\mu,T)$ be a dynamical system, $f \in  L^{\infty}(X)$,
and $k \in \N$. Then for every $\varepsilon > 0$, there exist measurable functions $f_{ns} ,f_z ,f_e $, such that
\begin{enumerate}[(a)]
\item $\|f_{\kappa}\|_{\infty} \leq 2 \|f\|_{\infty}$ with $\kappa \in \{ns,z,e\}$.
\item $f = f_{ns} + f_z + f_e$ with  $|\|f_z\|_{k+1} = 0;~~ \|f_e\|_1 <\varepsilon;$ and
\item for $\mu$ almost every $x \in X$, the sequence $(f_{ns}(T^nx))_{n \in \N}$ is a $k$-step nilsequence.
\end{enumerate}
\end{Prop}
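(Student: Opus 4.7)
The plan is to build the decomposition from the two pillars of Host--Kra theory already recalled in the excerpt: the characterization $\E(f\mid\mathcal{Z}_k)=0\Leftrightarrow \||f|\|_{k+1}=0$, and the deeper fact that $\mathcal{Z}_k$ is an inverse limit of $k$-step nilsystem factors $\pi_n\colon X\to Y_n=G_n/\Gamma_n$.

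First, I would set $f_z\egdef f-\E(f\mid \mathcal{Z}_k)$; this gives $\||f_z|\|_{k+1}=0$ and $\|f_z\|_\infty\leq 2\|f\|_\infty$ for free, so only the conditional expectation $g\egdef\E(f\mid \mathcal{Z}_k)$ remains to be split as $f_{ns}+f_e$. Writing $\mathcal{Z}_k^{(n)}\egdef\pi_n^{-1}(\cb_{Y_n})\uparrow\mathcal{Z}_k$, the martingale convergence theorem lets me pick $n$ large enough that $\|g-\E(g\mid\mathcal{Z}_k^{(n)})\|_1<\varepsilon/2$; I represent $\E(g\mid\mathcal{Z}_k^{(n)})=\tilde g_n\circ\pi_n$ with $\|\tilde g_n\|_\infty\leq\|f\|_\infty$. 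Since $Y_n$ is a compact metric space, combining density of Lipschitz functions in $C(Y_n)$ (invoked via Stone--Weierstrass in the excerpt) with density of $C(Y_n)$ in $L^1(Y_n)$ produces a Lipschitz $F\colon Y_n\to\C$ with $\|\tilde g_n-F\|_1<\varepsilon/2$; projecting $F$ pointwise onto the disk of radius $\|f\|_\infty$ preserves the Lipschitz bound and only improves the $L^1$-distance to the already-bounded $\tilde g_n$, so I may enforce $\|F\|_\infty\leq\|f\|_\infty$. Finally, set $f_{ns}\egdef F\circ\pi_n$ and $f_e\egdef g-f_{ns}$; because $\pi_n$ intertwines $T$ with left-translation by some $\gamma_n\in G_n$, the orbit $(f_{ns}(T^mx))_m=(F(\gamma_n^m\pi_n(x)\Gamma_n))_m$ is a basic $k$-step nilsequence for $\mu$-almost every $x$. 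Conditions (a), (b), (c) then follow from the construction, with $\|f_e\|_1\leq\|g-\E(g\mid\mathcal{Z}_k^{(n)})\|_1+\|\tilde g_n-F\|_1<\varepsilon$ and all $L^\infty$-bounds by the triangle inequality.

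The main obstacle is keeping the $L^\infty$-bound intact through two successive $L^1$-approximations while still landing on a genuinely Lipschitz approximant---necessary because only Lipschitz (in particular continuous) functions on $Y_n$ generate basic nilsequences along orbits. The simple but crucial tool is that the radial retraction $z\mapsto (|z|\wedge \|f\|_\infty)\,z/|z|$ on $\C$ is $1$-Lipschitz and fixes the disk of radius $\|f\|_\infty$, so each truncation step is free. The other serious input, the inverse-limit presentation of $\mathcal{Z}_k$ by nilsystem factors, is the deep Host--Kra structure theorem and is imported as a black box; without it one would have no ambient nilsystem on which to carry out the Lipschitz approximation.
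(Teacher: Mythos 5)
The paper does not actually prove this statement: it is imported verbatim as a black box (Proposition 3.1 of Chu--Frantzikinakis--Host \cite{chu-Nikos-H}), so there is no internal proof to compare against. Your reconstruction is essentially the argument given in that reference: peel off $f_z \egdef f-\E(f\mid\mathcal{Z}_k)$ via the characterization $\E(f\mid\mathcal{Z}_k)=0\Leftrightarrow \||f|\|_{k+1}=0$; then use the Host--Kra structure theorem of \cite{Host-K2} presenting $\mathcal{Z}_k$ as an inverse limit of $k$-step nilsystem factors, martingale convergence to pick a single nilsystem factor $Y_n$, Lipschitz approximation in $L^1(Y_n)$, and the $1$-Lipschitz radial retraction to restore the $L^\infty$ bound. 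Each of these steps is sound, and the bookkeeping for (a), (b), (c) checks out. The one point you should make explicit is ergodicity: the inverse-limit presentation of $\mathcal{Z}_k$ by nilsystem factors is the structure theorem for \emph{ergodic} systems, and \cite{chu-Nikos-H} states the decomposition under that hypothesis, which the present paper silently drops. For the statement as literally written you must either add the ergodicity assumption or run your construction fibrewise over the ergodic decomposition, where the nilmanifold $Y_n$ (hence $f_{ns}$) varies with the component; conclusion (c) tolerates this because the nilsequence is only required pointwise almost everywhere, but the measurable-selection bookkeeping is not automatic and deserves a sentence.
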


\section{Proof of the first main result (Theorem \ref{main}).}\label{sec:ms}

The proof of our first main result (Theorem \ref{main}) for $k \geq 2$ is essentially based on the inverse Gowers norms theorem  due to Green,
Tao and Ziegler \cite{Green-Tao-Z} combined with the recent result of
Host-Frantzikinakis \cite{Nikos-Host3}.
We need also the following result due to T. Tao.
\begin{Prop}[Uniform Discrete inverse theorem for Gowers norms, \cite{Taoblog}]\label{DITGN} Let $N \geq 1$
and $s \geq 1$ be integers, and let $\delta>0$. Suppose
$f~:~ \Z \longrightarrow [-1,1]$ is a function supported on $\{1,\cdots,N\}$ such that
$$\dfrac1{N^{s+2}}\sum_{{(n,\boldsymbol{n})} \in [1,N]^{s+1}}\prod_{{\boldsymbol{e}} \in \{0,1\}^{s+1}}f\big(n+{\boldsymbol{n}}.{\boldsymbol{e}}\big) \geq \delta.$$
Then there exists a filtered nilmanifold $G/\Gamma$ of degree $ \leq s$ and complexity $O_{s,\delta}(1)$, a
polynomial sequence $g~~:~~\Z \longrightarrow G$, and a Lipschitz function $F~~:~~G/\Gamma \longrightarrow \R$ of
Lipschitz constant $O_{s,\delta}(1)$ such that
$$\frac1{N}\sum_{n=1}^{N}f(n) F(g(n)\Gamma) \gg_{s, \delta} 1.$$
\end{Prop}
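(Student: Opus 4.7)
The plan is to prove this by induction on the order $s$, following the Green-Tao-Ziegler blueprint for the inverse theorem for the Gowers $U^{s+1}$ norms, with the quantitative refinements needed to obtain a nilmanifold, polynomial sequence, and Lipschitz function whose complexity is $O_{s,\delta}(1)$. The base case $s=1$ is the classical $U^2$ inverse theorem and is essentially Fourier analytic: Plancherel gives
\[
\|f\|_{U^2}^4 \;=\; \sum_{\xi} |\hat f(\xi)|^4 \;\leq\; \|\hat f\|_\infty^{2}\, \|f\|_2^{2},
\]
so the assumed lower bound forces some character $e(\xi n)$ to correlate with $f$ at level $\gg \delta^{O(1)}$, and characters are exactly the $1$-step nilsequences on $\R/\Z$.

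For the inductive step I would use the standard Gowers-Cauchy-Schwarz identity
\[
\|f\|_{U^{s+1}}^{2^{s+1}} \;=\; \E_{h\in [-N,N]}\,\|\Delta_h f\|_{U^{s}}^{2^{s}}, \qquad \Delta_h f(n):=f(n+h)\,\overline{f(n)},
\]
so that the hypothesis forces $\|\Delta_h f\|_{U^{s}}^{2^{s}} \gg \delta$ for a $\gg \delta$-proportion of shifts $h$. The inductive hypothesis then supplies, for each such $h$, an $(s-1)$-step filtered nilmanifold $G_h/\Gamma_h$ of complexity $O_{s,\delta}(1)$, a polynomial sequence $g_h$, and a Lipschitz function $F_h$ with
\[
\Big|\frac{1}{N}\sum_{n=1}^{N}\Delta_h f(n)\,F_h\bigl(g_h(n)\Gamma_h\bigr)\Big| \;\gg_{s,\delta}\; 1.
\]

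The main obstacle, and the crux of the whole argument, is the integration of this $h$-dependent family of $(s-1)$-step nilsequences into a single $s$-step nilsequence correlating with $f$ itself. I would proceed in three substeps. First, pigeonhole on the parameter space of bounded-complexity nilmanifolds to arrange that $G_h/\Gamma_h=G/\Gamma$ is independent of $h$, possibly after restricting to a positive-density subset of $h$. Second, apply the Green-Tao quantitative factorization theorem for polynomial orbits on nilmanifolds to replace each $g_h$ by an equidistributed polynomial sequence of bounded complexity, absorbing the non-equidistributed part into rational and smooth factors. Third, perform the symmetrization step: apply Cauchy-Schwarz in $h$ followed by another application of the already-established $U^{s}$ inverse theorem to force the cocycle $h \mapsto g_h(n)$ to depend polynomially on $(h,n)$ of degree $\leq s$, modulo lower-order fluctuations controlled by the degree-rank invariant of Green-Tao-Ziegler. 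This is the quantitative analogue of the Furstenberg-Host-Kra-Ziegler cocycle rigidity.

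Given this symmetrized polynomial structure, I would then construct, using the Mal'cev-basis machinery recalled in Section~\ref{Sec:dt}, a central extension $\widetilde G$ of $G$ by an abelian group so that $\widetilde G/\widetilde \Gamma$ is an $s$-step nilmanifold carrying a polynomial sequence $\widetilde g\in \mathrm{poly}(\Z,(\widetilde G_p))$ and a Lipschitz function $\widetilde F$, all of complexity $O_{s,\delta}(1)$, with
\[
\Big|\frac{1}{N}\sum_{n=1}^{N} f(n)\,\widetilde F\bigl(\widetilde g(n)\widetilde\Gamma\bigr)\Big| \;\gg_{s,\delta}\; 1,
\]
which is the desired conclusion. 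The hardest single step is unquestionably the symmetrization and central extension: it is where the non-abelian nature of $\widetilde G$ first appears, and where the degree-rank refinement of the filtration is indispensable to handle the failure of the naive lower central filtration above step $2$. The uniformity in the supremum over polynomial sequences in the statement is then obtained by an approximation/compactness argument on the finite-dimensional moduli space of filtered nilmanifolds of bounded complexity.
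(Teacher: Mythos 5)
There is a genuine gap here, and it helps to be clear about what this proposition actually is in the paper: it is an imported result. The paper offers no proof at all; it states the proposition and refers to \cite{Taoblog}, which in turn rests on the full inverse theorem of \cite{Green-Tao-Z}. Your proposal, by contrast, undertakes to reprove the inverse theorem itself by induction on $s$. The base case $s=1$ is fine, and your outline of the inductive step (extract a dense set of shifts $h$ with $\|\Delta_h f\|_{U^{s}}\gg_{s,\delta}1$, pigeonhole the nilmanifolds $G_h/\Gamma_h$, factorize the orbits, symmetrize in $h$, build a central extension) correctly names the skeleton of the Green--Tao--Ziegler argument. But naming these steps is not performing them: the ``symmetrization'' and ``cocycle rigidity'' step, the degree-rank induction, and the construction of the central extension $\widetilde G$ together occupy the bulk of a roughly 140-page paper, and nothing in your proposal reduces any of them to something checkable. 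As written, every genuinely hard assertion in your argument is a citation-shaped placeholder rather than a proof.

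More importantly, you have put the least weight on the one feature of the statement that the authors themselves single out, immediately after it, as distinguishing this version from \cite{Green-Tao-Z}: the uniformity in $N$, i.e.\ that the nilmanifold, its complexity, and the Lipschitz constant are $O_{s,\delta}(1)$ with no dependence on $N$, so that the correlating nilsequence does not change with the scale. Your treatment of this is a single closing sentence invoking ``an approximation/compactness argument on the moduli space of filtered nilmanifolds of bounded complexity,'' with no indication of how that compactness is established or how a correlation at scale $N$ passes to a limit object. If one is willing to black-box \cite{Green-Tao-Z} --- which is effectively what the paper does --- then this uniformization step is essentially the entire content that remains to be supplied, and it is exactly the part your proposal does not address. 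A defensible write-up would either cite the result wholesale, as the paper does, or give a complete deduction of the uniform version from the $N$-dependent one; the proposal does neither.
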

For the definition of complexity, we refer to \cite{Green-Tao-Z} and for the proof of Proposition \ref{DITGN} we refer to \cite{Taoblog}.
Using this version of the discrete inverse theorem for Gowers norms, T. Tao established the continuous version of the inverse theorem for Gowers norms.
According to this version, we notice that the nilsequence is independent of $N$. However, we warn the reader
that the version of the inverse theorem for Gowers norms in \cite{Green-Tao-Z} allows the nilsequence to depend on $N$.\\


At this point, let us give a proof of our first main result.\\

\noindent {\textbf{Proof of Theorem \ref{main}.}}
By our assumption
$\bnu$ is aperiodic, 
Therefore, by Theorem 2.2 from \cite{Nikos-Host3}, for any nilsequence $(u_n)$, we have
$$\frac{1}{N}\sum_{n=1}^{N}\bnu(n)u_n \tend{N}{+\infty}0.$$

Now, for the case $k=1$, we refer to \cite[Section 3]{al-lem}. Let us assume from now that $k \geq 2$.\\

We proceed by contradiction. Assume that  (\ref{con}) does not hold. Then, there exist $\delta>0$, a functions $f_e,$ $e \in V_k$ and $\mu(A)>0$
such that for each $x\in A$
 \[
\limsup_{N \longrightarrow +\infty}\dfrac1{N^k}\sum_{{\boldsymbol{n}} \in [1,N]^k}\prod_{{\boldsymbol{e}} \in C^*}\bnu({\boldsymbol{n}}.{\boldsymbol{e}})f_{{\boldsymbol{e}}}
\big(T_{{\boldsymbol{e}}}^{{\boldsymbol{n}}.{\boldsymbol{e}}}x\big) \geq \delta,
\]
where $\nu$ is an aperiodic bounded multiplicative function. Whence, by \cite[Proposition 3.2]{chu-nikos}, we have
$$\||\bnu(n) f_e\circ T_e^n(x)|\|_{U^{k+1}} \geq \delta,$$
for some $e \in C^*$ and  for any $x \in A$.\\

This combined with Proposition \ref{DITGN} yields that there exist a nilmanifold $G/\Gamma$, a filtration $(G_p)$, a
polynomial sequence $g$ and a Lipschitz function $F$ such that
$$\frac1{N}\sum_{n=1}^{N}\bnu(n) f_e(T_e^{n}x) F(g(n)\Gamma) \gg_{k, \delta} 1.$$

Applying the decomposition theorem (Proposition \ref{NSZE}), we
way write $f_e=f_{e,ns}+f_{e,z}+f_{e,e}$ and may assume that we have
$|\|f_{e,z}(T_e^n(x))\||_{U^{k+1}}=0$. Notice that we further have

$$\frac1{N}\sum_{n=1}^{N}\bnu(n) f_{e,ns}(T_e^{n}x) F(g(n)\Gamma) \tend{N}{+\infty}0,$$
since the pointwise product of two nilsequences is a nilsequence and $\bnu$ 
is aperiodic combined with the fact that 
the Gowers norms $\big\||\bnu\||_{U^s(N)} \tend{N}{+\infty}0$, $s \geq 2$, by Theorem 2.5 from \cite{Nikos-Host3}.\\

\noindent We thus get, up to some small error, that
 $$\frac1{N}\sum_{n=1}^{N}\bnu(n) f_e(T_e^{n}x) F(g(n)\Gamma)\sim
 \frac1{N}\sum_{n=1}^{N}\bnu(n) f_{e,z}(T_e^{n}x) F(g(n)\Gamma).$$

\noindent Whence, by the spectral theorem \footnote{The spectral measure $\sigma_{f_e}$ is a finite measure on the circle given by
$\ds \widehat{\sigma_{f_e}}(n)=\int f_e(T_e^nx) \overline{f_e(x)}d\mu(x).$}, we can write

\begin{align*}
&\Big\|\frac1{N}\sum_{n=1}^{N}\bnu(n) f_{e,z}(T_e^{n}x) F(g(n)\Gamma)\Big\|_2\\
&=\Big\|\frac1{N}\sum_{n=1}^{N}\bnu(n) z^n F(g(n)\Gamma)\Big\|_{L^2(\sigma_{f_{e,z}})}\gg_{k, \delta} 1.
\end{align*}
Letting $N \longrightarrow +\infty$, we get a contradiction since  $\big\||\bnu\||_{U^s(N)} \tend{N}{+\infty}0$, $s \geq 2$.
The proof of the theorem is complete.

\section{Proof of our second main result (Theorem \ref{main-Mangoldt})}

In the same spirit as in the proof of our first main result we start by proving the following

\begin{Prop}\label{Mangoldt-nilsequence}The cubic nonconventional ergodic averages of any order with von Mangoldt function
weight converge almost surely provided that the systems are nilsystems.
\end{Prop}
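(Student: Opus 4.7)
The plan is to combine the $W$-trick from sieve theory with the recent Gowers-norm estimate for the modified von Mangoldt function of Ford-Green-Konyagin-Tao \cite{FGKtao} and the Chu-Frantzikinakis pointwise convergence theorem \cite{chu-nikos} for unweighted cubic averages, as announced in the introduction. Since every $T_e$ is a nilrotation on $X = G/\Gamma$ and the Lipschitz algebra $\mathcal{L}(X, d_{\mathcal{X}})$ is dense in $C(X)$, I may assume each $f_{\boldsymbol{e}}$ is Lipschitz, so that $\prod_{\boldsymbol{e}} f_{\boldsymbol{e}}\big(T_{\boldsymbol{e}}^{\boldsymbol{n}\cdot\boldsymbol{e}} x\big)$ is a multi-variable nilsequence in $\boldsymbol{n}\in\Z^k$.

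Next I perform the $W$-trick. Pick a slowly growing $w = w(N) \to \infty$ and set $W = \prod_{p \leq w} p$. Writing $\boldsymbol{n} = W\boldsymbol{m} + \boldsymbol{r}$ with $\boldsymbol{r} \in \{0, \ldots, W-1\}^k$, the average splits as
\[
A_N(x) = \sum_{\boldsymbol{r}} \dfrac{1}{N^k} \sum_{\boldsymbol{m}} \prod_{\boldsymbol{e} \in C^*} \vMan\big(W(\boldsymbol{m}\cdot\boldsymbol{e}) + \boldsymbol{r}\cdot\boldsymbol{e}\big)\, f_{\boldsymbol{e}}\big(T_{\boldsymbol{e}}^{W(\boldsymbol{m}\cdot\boldsymbol{e}) + \boldsymbol{r}\cdot\boldsymbol{e}} x\big).
\]
Only residues $\boldsymbol{r}$ with $\gcd(\boldsymbol{r}\cdot\boldsymbol{e},\, W)=1$ for every $\boldsymbol{e}\in C^*$ contribute in the limit (otherwise the only prime powers that $W(\boldsymbol{m}\cdot\boldsymbol{e}) + \boldsymbol{r}\cdot\boldsymbol{e}$ can represent are $p^\alpha$ with $p \leq w$, whose total contribution vanishes as $N\to\infty$).

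On each surviving class I introduce the normalized weight $\Lambda'_{W,b}(m) = \frac{\phi(W)}{W}\vMan(Wm + b)$, which by \cite{FGKtao} satisfies $\|\Lambda'_{W,b} - 1\|_{U^s[N/W]} \to 0$ as $N \to \infty$ for every $s\geq 2$ uniformly in $b$ coprime to $W$. A cubic generalized-von-Neumann estimate (repeated Cauchy--Schwarz, in the spirit of \cite[Proposition 3.2]{chu-nikos}) then allows me to replace each factor $\Lambda'_{W,\boldsymbol{r}\cdot\boldsymbol{e}}$ by $1$ up to an $o_N(1)$ error. After the correct normalization $(W/\phi(W))^{|C^*|}$, the surviving main term is a finite sum over admissible $\boldsymbol{r}$ of \emph{unweighted} cubic averages for the nilsystem $\big(X, (T_e^W)_e\big)$ started at $T_{\boldsymbol{e}}^{\boldsymbol{r}\cdot\boldsymbol{e}} x$; each of these converges almost surely by the Chu-Frantzikinakis theorem \cite{chu-nikos}, and hence so does $A_N(x)$.

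\textbf{Main obstacle.} The delicate step is justifying the generalized-von-Neumann replacement in a \emph{pointwise} (not merely $L^2$) sense: one must convert the $U^s$-control $\|\Lambda'_{W,b}-1\|_{U^s} = o(1)$ into a uniform-in-$x$ bound on the resulting cubic average. This requires the inverse theorem for the Gowers norms (Proposition~\ref{DITGN}) combined with the Green-Tao quantitative equidistribution for polynomial orbits on nilmanifolds, in order to transfer the norm control onto the nilsequence factors; one must also verify that the diagonal contributions (tuples $\boldsymbol{n}$ for which two distinct $\boldsymbol{n}\cdot\boldsymbol{e}$ coincide) are negligible, which follows from the at-most-logarithmic growth of $\vMan$ on prime powers.
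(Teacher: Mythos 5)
Your proposal follows essentially the same route as the paper: the $W$-trick to pass to the normalized weights $\Lambda'_{W,b}$, the Ford--Green--Konyagin--Tao/Green--Tao input that $\Lambda'_{W,b}-1$ is Gowers-uniform (equivalently, orthogonal to nilsequences), and a reduction of the weighted cubic average to the unweighted one, which converges pointwise. Your write-up is in fact more explicit than the paper's very terse argument (which simply asserts that the term carrying the factors $\Lambda'_{b_i,W}-1$ vanishes by orthogonality to nilsequences), and the delicate points you flag --- the pointwise transfer of the $U^s$ control and the normalization by the local factors --- are real but are glossed over in the paper as well.
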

The proof of Proposition \ref{Mangoldt-nilsequence} is largely inspired from Ford-Green-Konyagin-Tao's proof of the main theorem in \cite{FGKtao}. It used also some
elementary fact on Gowers uniformity semi-norms.

As it is mentioned in \cite{FGKtao}, the fundamental ingredients in the proof of the main theorem are the M\"{o}bius disjointness of the
nilsequences combined with
the inverse Gowers theorem (Proposition \ref{DITGN}) and the so-called ``W-trick''. 
For the W-trick, we define
$$W=\prod_{\overset{p \in \cp}{p \leq \omega(N)}}p ,$$
where $\omega(N) \leq \frac{1}{2} \log\big(\log(N)\big)$ for large $N \in \N$. Therefore, by the PNT, we have $W=O(\sqrt{\log(N)}).$ For $r<W$ and coprime to $W$, we put
$$\Lambda'_{r,\omega}(n)=\frac{\phi(W)}{W}\Lambda'(Wn+r),~~~ n \in \N.$$
Let $(a_n)$ be a given sequence, then, by the seive methods (see for instance \cite{Iwaniec}), we have
\begin{align*}\label{Seive}
\frac{1}{WN}\sum_{n=1}^{WN}\Lambda'(n)a_n&=\frac{1}{W}\sum_{\overset{r<W}{\pgdc(r,W)=1}}\frac{1}{N}\sum_{n=1}^{N}\Lambda'(Wn+r)a_{Wn+r}+o_W(1)\nonumber \\
&=\frac{1}{\phi(W)}\sum_{\overset{r<W}{\pgdc(r,W)=1}}\frac{1}{N}\sum_{n=1}^{N}\Lambda'_{r,\omega}a_{Wn+r}+o_W(1) \nonumber\\
\end{align*}
Whence
\begin{align}
\frac{1}{WN}\sum_{n=1}^{WN}\Lambda'(n)a_n  \nonumber &=\frac{1}{\phi(W)}\sum_{\overset{r<W}{\pgdc(r,W)=1}}\frac{1}{N}\sum_{n=1}^{N}\big(\Lambda'_{r,\omega}-1\big)a_{Wn+r}\\
& \ \ \ \ +\frac{1}{\phi(W)}\sum_{\overset{r<W}{\pgdc(r,W)=1}}\frac{1}{N}\sum_{n=1}^{N}a_{Wn+r}+o_W(1).
\end{align}
Moreover, if the sequence $(a_n)$ satisfy $a_n=o(n)$ then the convergence of the sequence $\Big(\frac{1}{WN}\sum_{n=1}^{WN}a_n\Big)$ implies the convergence of the sequence
$\Big(\frac{1}{N}\sum_{n=1}^{N}a_n\Big).$\\

We need also the following technical lemma, and we present it in a form which is more precise than we need here, since we hope it may 
find other applications.
\begin{lem}\label{NH}Let $(a_n)$ be a bounded sequence of complex numbers. Then, we have
$$\Big|\frac{1}{\pi(N)}\sum_{\overset{p\textrm{~~prime}}{p \leq N}}a_p-\frac1{N}\sum_{n=1}^{N}\Lambda(n)a_n\Big|
\leq \frac{8C}{\log N}+\frac{6C^2}{(\log N)^2}+\frac{(\log N)^2}{2\sqrt{N}\log 2},$$
where $C$ is some absolute positive constant.
\end{lem}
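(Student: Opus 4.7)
The plan is to separate the contribution of proper prime powers from that of primes, then to compare the weighted prime sum with the unweighted average over primes via the elementary Prime Number Theorem estimates already recorded in Section~\ref{Sec:dt}, namely $\pi(N)=N/\log N+O(N/(\log N)^2)$ and $\upsilon(N)=N+O(N/\log N)$. First I would split
$$\frac{1}{N}\sum_{n=1}^{N}\Lambda(n)\,a_n \;=\; \frac{1}{N}\sum_{p\le N} a_p \log p \;+\; \frac{1}{N}R_N,\qquad R_N := \sum_{k\ge 2}\sum_{p^k\le N} a_{p^k}\log p.$$
For $R_N$ the trivial bounds $\pi(x)\le x$ and $\log p\le(\log N)/k$ whenever $p\le N^{1/k}$ give $\theta(N^{1/k})\le(\log N/k)\,N^{1/k}$; summing over $k=2,\ldots,\lfloor\log_2 N\rfloor$ with $1/k\le 1/2$ and $N^{1/k}\le\sqrt{N}$ produces
$$|R_N|\le \frac{\|a\|_\infty\log N}{2}\sum_{k=2}^{\lfloor\log_2 N\rfloor}N^{1/k}\;\le\; \frac{\|a\|_\infty\sqrt{N}(\log N)^2}{2\log 2},$$
which after division by $N$ is exactly the third summand of the claimed bound.

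Next, setting $A(N):=\sum_{p\le N}a_p$, I would use the algebraic identity
$$\frac{A(N)}{\pi(N)}-\frac{1}{N}\sum_{p\le N}a_p\log p \;=\; A(N)\Bigl(\frac{1}{\pi(N)}-\frac{\log N}{N}\Bigr)+\frac{1}{N}\sum_{p\le N}a_p\bigl(\log N-\log p\bigr)$$
to reduce the main comparison to two pieces. The second sum has modulus at most $\|a\|_\infty(\pi(N)\log N-\theta(N))/N$, and combining $|\pi(N)\log N-N|\le CN/\log N$ with Selberg's $|\theta(N)-N|\le CN/\log N$ gives a contribution of order $C/\log N$. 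For the first piece I would write $1/\pi(N)-\log N/N=(N-\pi(N)\log N)/(N\pi(N))$; substituting the numerator bound together with the Chebyshev lower bound $\pi(N)\ge N/(C\log N)$ and the upper bound $|A(N)|\le\pi(N)\le N/\log N+CN/(\log N)^2$ yields a main term of order $C/\log N$ plus a genuine second-order cross-term of order $C^2/(\log N)^2$ arising from the $O(N/(\log N)^2)$ part of the PNT expansion of $\pi(N)$. Assembling the three estimates produces the three summands of the stated bound.

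The only real obstacle is the bookkeeping of constants: the factors $8$ and $6$ arise from combining several independent applications of the PNT/Chebyshev estimates---one for $\pi(N)\log N\approx N$, one for $\theta(N)\approx N$, and one for the Taylor expansion of $1/\pi(N)$ around $\log N/N$---and one must verify that the $O(N/(\log N)^2)$ cross-terms are consistently collected without double counting. Apart from this the argument is entirely elementary, using nothing beyond the Chebyshev bound $\pi(x)\le x$ and the PNT error terms already recalled in Section~\ref{Sec:dt}.
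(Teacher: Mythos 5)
Your proposal is correct and follows essentially the same route as the paper: you peel off the proper prime powers to get the $\tfrac{(\log N)^2}{2\sqrt{N}\log 2}$ term (the paper cites this as Chebyshev's bound on $(\Psi-\upsilon)/N$, which you reprove directly), and then compare $\tfrac{1}{\pi(N)}$ with $\tfrac{\log p}{N}$ by inserting $\tfrac{\log N}{N}$ and invoking the PNT with remainder together with $\upsilon(N)=N+O(N/\log N)$, exactly as in the paper's triangle-inequality estimate of $\tfrac1N\sum_{p\le N}\bigl|\tfrac{N}{\pi(N)}-\log p\bigr|$. The only differences are organizational (an algebraic identity versus a pointwise triangle inequality inside the sum), not mathematical.
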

For the proof of Lemma \ref{NH} we need the following classical result due to Chebyshev \cite[p. 76]{Apostol}

\begin{Prop}\label{chebyshev}For any $x>0$,
$$0 \leq \frac{\Psi(x)-\upsilon(x)}{x} \leq \frac{(\log x)^2}{2\sqrt{x}\log x}.$$
\end{Prop}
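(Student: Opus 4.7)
The plan is to prove Proposition \ref{chebyshev} by exploiting the fact that the only difference between $\Psi$ and $\upsilon$ comes from genuine prime powers $p^k$ with $k\ge 2$, so the difference sum is supported on a sparse set that we can control elementarily, without any appeal to the prime number theorem.

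\textbf{Step 1: identify the difference.} Since $\Lambda(n)=\log p$ when $n=p^{k}$ and $\Lambda'(n)=\log p$ only when $k=1$, I would write
\[
\Psi(x)-\upsilon(x)=\sum_{\substack{n\le x\\ n=p^{k},\,k\ge 2}}\log p=\sum_{k\ge 2}\sum_{p\le x^{1/k}}\log p=\sum_{k\ge 2}\upsilon\bigl(x^{1/k}\bigr).
\]
In particular the difference is non-negative, which gives the left inequality $\Psi(x)\ge\upsilon(x)$ for free.

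\textbf{Step 2: truncate the outer sum.} The contribution from $k$ is zero as soon as $x^{1/k}<2$, i.e.\ $k>\log x/\log 2$. Hence
\[
\Psi(x)-\upsilon(x)=\sum_{k=2}^{K}\upsilon\bigl(x^{1/k}\bigr),\qquad K=\Bigl\lfloor\tfrac{\log x}{\log 2}\Bigr\rfloor,
\]
so the outer sum has at most $\log x/\log 2$ terms.

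\textbf{Step 3: bound each term crudely.} For each $k\ge 2$, I would use the trivial estimate
\[
\upsilon\bigl(x^{1/k}\bigr)=\sum_{p\le x^{1/k}}\log p\le\pi\bigl(x^{1/k}\bigr)\log\bigl(x^{1/k}\bigr)\le x^{1/k}\cdot\tfrac{\log x}{k}\le \sqrt{x}\cdot\tfrac{\log x}{2},
\]
where the last inequality uses $k\ge 2$, so $x^{1/k}\le\sqrt x$ and $(\log x)/k\le(\log x)/2$. This deliberately wasteful bound is uniform in $k\ge 2$ and is enough because there are only logarithmically many terms to add.

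\textbf{Step 4: assemble.} Summing the uniform bound over the $\le\log x/\log 2$ values of $k$ gives
\[
\Psi(x)-\upsilon(x)\le\frac{\log x}{\log 2}\cdot\sqrt{x}\cdot\frac{\log x}{2}=\frac{\sqrt{x}\,(\log x)^{2}}{2\log 2},
\]
and dividing by $x$ yields $\bigl(\Psi(x)-\upsilon(x)\bigr)/x\le(\log x)^{2}/(2\sqrt{x}\log 2)$, which matches the right-hand side of the proposition (reading the denominator $\log x$ in the statement as a typo for $\log 2$, consistent with the constant appearing in Lemma~\ref{NH}). There is no genuine obstacle in this argument; the only thing to be careful about is not accidentally using a Chebyshev-type upper bound $\theta(y)=O(y)$ while proving Chebyshev's inequality itself, which is why I stick with the entirely elementary estimate $\theta(y)\le y\log y$.
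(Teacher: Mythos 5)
Your proof is correct and is precisely the classical argument the paper is implicitly relying on: the paper offers no proof of Proposition \ref{chebyshev}, citing Apostol instead, and your decomposition $\Psi(x)-\upsilon(x)=\sum_{k\ge 2}\upsilon(x^{1/k})$ with the trivial bound $\upsilon(y)\le y\log y$ and the truncation at $k\le \log x/\log 2$ is exactly that textbook proof. You are also right that the denominator $\log x$ in the statement is a typo for $\log 2$, as confirmed by the constant $\frac{(\log N)^2}{2\sqrt{N}\log 2}$ used in Lemma \ref{NH}.
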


\begin{proof}[\textbf{Proof of Lemma \ref{NH}}]  We assume Without lost of generalities that $(a_n)$ is bounded by 1.
 By the triangle inequality, we  have
\begin{eqnarray*}
\Big|\frac{1}{\pi(N)}\sum_{\overset{p\textrm{~~prime}}{p \leq N}}a_p-\frac1{N}\sum_{n=1}^{N}\Lambda(n)a_n\Big|
& & \leq \Big|\frac{1}{\pi(N)}\sum_{\overset{p\textrm{~~prime}}{p \leq N}}a_p-\frac{1}{N}\sum_{n=1}^{N}\Lambda'(n)a_n\Big|\\
& &+ \Big|\frac{1}{N}\sum_{n=1}^{N}\Lambda'(n)a_n-\frac{1}{N}\sum_{n=1}^{N}\Lambda(n)a_n\Big|.
\end{eqnarray*}
Therefore
\begin{eqnarray*}
 &&\Big|\frac{1}{\pi(N)}\sum_{\overset{p\textrm{~~prime}}{p \leq N}}a_p-\frac1{N}\sum_{n=1}^{N}\Lambda(n)a_n\Big|\leq\\
 &&\Big|\frac{1}{\pi(N)}\sum_{\overset{p\textrm{~~prime}}{p \leq N}}a_p-\frac{1}{N}\sum_{n=1}^{N}\Lambda'(n)a_n\Big|+\frac{(\log N)^2}{2 \sqrt{N}\log N},
\end{eqnarray*}
by Proposition \ref{chebyshev}. Now, we need to estimate the first term in the RHS. For that, we observe that
\begin{eqnarray*}
 &&\Big|\frac{1}{\pi(N)}\sum_{\overset{p\textrm{~~prime}}{p \leq N}}a_p-\frac{1}{N}\sum_{n=1}^{N}\Lambda'(n)a_n\Big| \\
 &\leq&\Big|\frac{1}{N}\sum_{{\overset{p \leq N}{p \textrm{~prime}}}}\frac{N}{\pi(N)}a_p-\frac{1}{N}\sum_{{\overset{p \leq N}{p \textrm{~prime}}}}\log(p)a_p\Big|\\
 &\leq&\frac{1}{N}\sum_{{\overset{p \leq N}{p \textrm{~prime}}}} \Big|\frac{N}{\pi(N)}-\log(p)\Big|
 \end{eqnarray*}
But, by PNTR, we have
\begin{eqnarray}\label{PNTR}
 \Big|\frac{N}{\pi(N)}-\log(N)\Big| \leq \frac{C.N}{\pi(N).\log(N)},
\end{eqnarray}
where $C$ is an absolute positive constant. Moreover, for any $N \geq 2$, we have $\pi(N) \leq 6 \frac{N}{\log(N)}$. We can thus rewrite
\eqref{PNTR} as follows
\begin{eqnarray}\label{PNTR2}
 \Big|\frac{N}{\pi(N)}-\log(N)\Big| \leq 6C.
\end{eqnarray}
This combined with the triangle inequality gives
\begin{eqnarray*}
 &&
\frac{1}{N}\sum_{{\overset{\ell \leq N}{\ell \textrm{~prime}}}} \big|\frac{N}{\pi(N)}-\log(\ell)\big|\\
&\leq&
\frac{1}{N}\sum_{{\overset{\ell \leq N}{\ell \textrm{~prime}}}} \big|\frac{N}{\pi(N)}-\log(N)\big|
+ \frac{1}{N}\sum_{{\overset{\ell \leq N}{\ell \textrm{~prime}}}} \big(\log(N)-\log(\ell)\big),
\end{eqnarray*}
Whence
\begin{eqnarray*}
 && \Big|\frac{1}{\pi(N)}\sum_{\overset{p\textrm{~~prime}}{p \leq N}}a_p-\frac{1}{N}\sum_{n=1}^{N}\Lambda'(n)a_n\Big| \\
 &\leq&
 \frac{\pi(N)}{N}\Big|\frac{N}{\pi(N)}-\log(N)\Big|+\frac{\log(N).\pi(N)}{N}-\frac{\upsilon(N)}{N}\\
 &\leq& \frac{8C}{\log(N)}+\frac{6C^2}{(\log(N))^2}
 \end{eqnarray*}
 \end{proof}
From the proof of Lemma \ref{NH} the esimation of the sum over the prime is reduced to the estimation of the following quantity
$$ \frac{1}{N}\sum_{n=1}^{N}\Lambda(n)a_n
 \textrm{~~~or~~~}
\frac{1}{N}\sum_{n=1}^{N}\Lambda'(n)a_n.$$
Indeed, we have proved the following
$$ \Big|\displaystyle \frac{1}{\pi(N)}\sum_{n \leq N}\Lambda(n)a_n-\frac1{N}\sum_{n=1}^{N}\Lambda'(n)a_n\Big|
\leq \frac{(\log(N))^2}{2\sqrt{N}\log(2)},$$
and
$$ \Big|\displaystyle \frac{1}{\pi(N)}\sum_{\overset{p\textrm{~~prime}}{p \leq N}}a_p-\frac1{N}\sum_{n=1}^{N}\Lambda'(n)a_n\Big| \leq \frac{8C}{(\log(N))}+\frac{6C^2}{(\log(N))^2},
$$ for some absolute positive constant $C$.\\

\begin{proof}[\textbf{Proof of Proposition \ref{Mangoldt-nilsequence}.}] For the case $k=1$. The result holds for any dynamical system. Indeed,
the result follows from Lemma \ref{NH} combined with the main result
from \cite{Wierdl}. Let us assume from now that $k \geq 2$. Then,
as in Ford-Green-Konyagin-Tao's proof, it suffices to see that the following holds
$$\frac1{N^k}\sum_{\vec{n}\in [1,N]^k}\prod_{e \in C^*}\big(\Lambda'_{b_i,W}(\vec{n}.e)-1\big) \prod_{e \in C^*}f_e(T_e^{\vec{n}.e}x)), \eqno(NCSM)$$
where $b_i \in [1,W]$, $i=1,\cdots,\phi(W),$ coprime to $W$. But $\big(\Lambda'_{b_i,W}-1\big)$ is orthogonal to the nilsequences by Green-Tao result. Therefore the limit of the quantity $(NCSM)$ is zero.
\end{proof}
We are now able to prove our second main result.
\begin{proof}[\textbf{Proof of Theorem \ref{main-Mangoldt}.}]
We proceed as before by claiming that
$$\frac1{N^k}\sum_{\vec{n}\in [1,N]^k}\prod_{e \in C^*}\big(\Lambda'_{b_i,W}(\vec{n}.e)-1\big) \prod_{e \in C^*}f_e(T_e^{\vec{n}.e}x)) \tend{N}{+\infty}0.$$
If not then there exist $\delta>0$ a functions $f_e$, $e \in V$ and a Borel set $A$ with positive measure such that
for each $x\in A$
 \[
\limsup_{N \longrightarrow +\infty}\dfrac1{N^k}\sum_{{\boldsymbol{n}} \in [1,N]^k}\prod_{e \in C^*}\big(\Lambda'_{b_i,W}(\vec{n}.e)-1\big)f_{{\boldsymbol{e}}}
\big(T_{{\boldsymbol{e}}}^{{\boldsymbol{n}}.{\boldsymbol{e}}}x\big) \geq \delta.
\]
Whence, by the same arguments as in the proof of Theorem \ref{main}, we get that there exist a
nilmanifold $G/\Gamma$, a filtration $(G_p)$, a
polynomial sequence $g$ and a Lipschitz function $F$ such that
$$\frac1{N}\sum_{n=1}^{N} \big(\Lambda'_{b_i,W}(n)-1\big) f_e(T_e^{n}x) F(g(n)\Gamma) \gg_{k, \delta} 1.$$

\noindent We way further assume without lost of generalities that
$$\big|\big\|f_{e}(T_e^n(x))\big\|\big|_{U^{k+1}}=0,$$ and write, up to some small error, that
$$\frac1{N}\sum_{n=1}^{N} \big(\Lambda'_{b_i,W}(n)-1\big)f_e(T_e^{n}x) F(g(n)\Gamma) \gg_{k, \delta} 1.$$
Integrating with respect to $x$ and applying the spectral theorem, we see that
$$\Big\|\frac{1}{\phi(W)}\sum_{i=1}^{\phi(W)}\frac1{N}\sum_{n=1}^{N} \big(\Lambda'_{b_i,W}(n)-1\big) z^n F(g(n)\Gamma)\Big\|_{L^2(\sigma_{f_e})} \gg_{k, \delta} 1 .$$

\noindent Whence, by letting $W, N \longrightarrow + \infty$, we get
$$\limsup\Big\|\frac{1}{\phi(W)}\sum_{i=1}^{\phi(W)}\frac1{N}\sum_{n=1}^{N}  \big(\Lambda'_{b_i,W}(n)-1\big) z^n F(g(n)\Gamma)\Big\|_{L^2(\sigma_{f_e})} \gg_{k, \delta} 1,$$

\noindent which contradicts Green-Tao Theorem since the sequence
$$\Big(\frac{1}{\phi(W)}\sum_{i=1}^{\phi(W)}\frac1{N}\sum_{n=1}^{N}  \big(\Lambda'_{b_i,W}(n)-1\big) z^n F(g(n)\Gamma)\Big)$$
is bounded by the PNT, Lemma \ref{NH} and the triangle inequality, and it converges to zero. The proof of the theorem is complete.
\end{proof}
\begin{rem}
 Let us stress that according to Ford-Green-Konyagin-Tao's Theorem \cite{FGKtao}, if all the functions $f_e$ are constant (say equal $1$), then the limit is given by the following explicit expression
 $$\prod_{p}\beta_p,
\textrm{~~where~~}\beta_p=\frac1{p^d}\sum_{\vec{n} \in (\Z/p\Z)^d}\prod_{e \in C^*}\Lambda_{\Z/p\Z}(\vec{n}.e).$$
The function $\Lambda_{\Z/p\Z}$ is the local von Mangoldt function, that is, the $p$-periodic function defined by setting
$\Lambda_{\Z/p\Z}(b)=\frac{p}{p-1}$ when $b$ is coprime to $p$ and $\Lambda_{\Z/p\Z}(b)=0$ otherwise. 
\end{rem}

Our proof suggests the following questions.
\begin{que} Let $(X,\ca,T,\mu)$ a dynamical system, let $f \in L^1(X)$, do we have that there exists a Borel set $X'$ with full measure such that for any nilsequence
$(a_n)$, for any $x \in X'$, the following averages
$$1/\pi(N) \sum_{\overset{p \in \cp}{p < N}}a_p f(T^px),$$
converge? In other word, can one establish  Wiener-Wintner version of the prime ergodic theorem?
\end{que}

Let us notice further that the Gowers uniformity norm of $\bmu$ is small, that is, \linebreak $\||\bmu|\|_{U^k(N)}\tend{N}{+\infty}0$. 
This suggests the following generalization of Sarnak's conjecture.
\begin{que}
 Do we have that for any multiplicative function $(\bnu(n))$ with small \linebreak Gowers norms
 for any dynamical flow on a compact set $(X,T)$ with topological entropy zero, for any continuous function $f$, for all $x \in X$,
\[
\frac1{N}\sum_{n=1}^{N}\bnu(n)f(T^nx) \tend{N}{+\infty}0?
\]
\end{que}

\noindent By our assumption, it is obvious that for any nilsystem $(X,T),$  for any continuous function $f$, for any $x \in X$, we have
\[
\frac1{N}\sum_{n=1}^{N}\bnu(n)f(T^nx) \tend{N}{+\infty}0.
\]
Let's stress  that by the recent result of L. Matthiesen \cite{Lilian} there is a class of not necessarily bounded multiplicative
functions with a small Gowers norms.

\begin{thank}
The first author would like to express his heartfelt thanks to Benjamin Weiss
for the discussions on the subject. It is a great pleasure also for him to acknowledge the warm
hospitality of University of Science and Technology of China and Fudan University where this work has been done and revised.
\end{thank}


\end{document}